\definecolor{verylight}{gray}{0.97}
\definecolor{light}{gray}{0.9}
\definecolor{medium}{gray}{0.85}
\definecolor{dark}{gray}{0.6}
 \def\MG{{\mathcal G}}
 \def\G{{\mathcal G}}
 \def\ab{{\mathbf a}}
 \def\opn#1#2{\def#1{\operatorname{#2}}} 
 \opn\chara{char} \opn\length{\ell} \opn\pd{pd} \opn\rk{rk}
 \opn\projdim{proj\,dim} \opn\injdim{inj\,dim} \opn\rank{rank}
 \opn\depth{depth} \opn\grade{grade} \opn\height{height}
 \opn\embdim{emb\,dim} \opn\codim{codim}
 \opn\Tr{Tr} \opn\bigrank{big\,rank}
 \opn\superheight{superheight}\opn\lcm{lcm}
 \opn\trdeg{tr\,deg}
 \opn\reg{reg} \opn\lreg{lreg} \opn\ini{in} \opn\lpd{lpd}
 \opn\size{size} \opn\sdepth{sdepth}
 \opn\link{link}\opn\fdepth{fdepth}\opn\lex{lex}
 \opn\tr{tr}
 \opn\type{type}
 \opn\gap{gap}
 \opn\div{div} \opn\Div{Div} \opn\cl{cl} \opn\Cl{Cl}
 \opn\Spec{Spec} \opn\Supp{Supp} \opn\supp{supp} \opn\Sing{Sing}
 \opn\Ass{Ass} \opn\Min{Min}\opn\Mon{Mon}
 \opn\Ann{Ann} \opn\Rad{Rad} \opn\Soc{Soc}
 \opn\Im{Im} \opn\Ker{Ker} \opn\Coker{Coker} \opn\Am{Am}
 \opn\Hom{Hom} \opn\Tor{Tor} \opn\Ext{Ext} \opn\End{End}
 \opn\Aut{Aut} \opn\id{id}
 \opn\nat{nat}
 \opn\pff{pf}
 \opn\Pf{Pf} \opn\GL{GL} \opn\SL{SL} \opn\mod{mod} \opn\ord{ord}
 \opn\Gin{Gin} \opn\Hilb{Hilb}\opn\sort{sort}
 \opn\PF{PF}\opn\Ap{Ap}
 \opn\aff{aff}
 \opn\relint{relint} \opn\st{st}
 \opn\lk{lk} \opn\cn{cn} \opn\core{core} \opn\vol{vol}  \opn\inp{inp} \opn\nilpot{nilpot}
 \opn\link{link} \opn\star{star}\opn\lex{lex}\opn\set{set}
 \opn\width{wd}
 \opn\Fr{F}
 \opn\QF{QF}
 \opn\G{G}
 \opn\type{type}\opn\res{res}
 \opn\conv{conv}
 \opn\gr{gr}
 \def\pot#1#2{#1[\kern-0.28ex[#2]\kern-0.28ex]}
 \opn\dirlim{\underrightarrow{\lim}}
 \opn\inivlim{\underleftarrow{\lim}}
 \let\to=\rightarrow
 \def\Implies{\ifmmode\Longrightarrow \else
         \unskip${}\Longrightarrow{}$\ignorespaces\fi}
 \def\implies{\ifmmode\Rightarrow \else
         \unskip${}\Rightarrow{}$\ignorespaces\fi}
 \def\iff{\ifmmode\Longleftrightarrow \else
         \unskip${}\Longleftrightarrow{}$\ignorespaces\fi}
 \newtheorem{Theorem}{Theorem}[section]
 \newtheorem{Lemma}[Theorem]{Lemma}
 \newtheorem{Example}[Theorem]{Example}
 \newtheorem{Definition}[Theorem]{Definition}
 \let\epsilon\varepsilon
 \let\kappa=\varkappa
 \def\qed{\ifhmode\textqed\fi
       \ifmmode\ifinner\quad\qedsymbol\else\dispqed\fi\fi}
 \def\textqed{\unskip\nobreak\penalty50
        \hskip2em\hbox{}\nobreak\hfil\qedsymbol
        \parfillskip=0pt \finalhyphendemerits=0}
 \def\dispqed{\rlap{\qquad\qedsymbol}}
 \opn\dis{dis}
 \def\pnt{{\raise0.5mm\hbox{\large\bf.}}}
 \opn\Lex{Lex}
\begin{document}
\title {Sortable Freiman ideals }

\author {J\"urgen Herzog and  Guangjun Zhu$^{^*}$}

\address{J\"urgen Herzog, Fachbereich Mathematik, Universit\"at Duisburg-Essen, Campus Essen, 45117
Essen, Germany} \email{juergen.herzog@uni-essen.de}

\address{Guangjun Zhu, School of Mathematical Sciences, Soochow University,
 Suzhou 215006, P. R. China. }
\email{zhuguangjun@suda.edu.cn(Corresponding author:Guangjun Zhu)}

\dedicatory{ }

\begin{abstract}
In this paper it is shown that a sortable ideal $I$ is Freiman if and only if its sorted graph is chordal. This characterization is used to give
a complete classification of  Freiman principal Borel  ideals and of Freiman Veronese type ideals with constant bound.
\end{abstract}

\thanks{* Corresponding author. }

\subjclass[2010]{Primary 13C99; Secondary 13H05, 13H10.}


\keywords{Freiman ideal, sorted ideal,  principal Borel  ideal, Veronese type ideals with constant bound}

\maketitle

\setcounter{tocdepth}{1}

\section*{Introduction}
The concept  of Freiman ideals appeared the first time in \cite{HZ}. Based on a famous theorem from additive number theory, due to  Freiman
\cite{F1},  it was proved  in  \cite{HMZ} that if $I\subset S$ is an equigenerated  monomial ideal,  then  $\mu(I^2)\geq
\ell(I)\mu(I)-{\ell(I)\choose 2}$. Here $\mu(I)$ denotes the least number of generators of
the ideal $I$  and $\ell(I)$ the analytic spread of $I$.  The assumption that  $I$ be equigenerated  is  essential. Indeed, in \cite{EHM} it has
been shown that for every integer $m\geq 6$, there exists a monomial ideal $I\subset  K[x,  y]$  such that $\mu(I) = m$  and $\mu(I^2) = 9$.

In \cite{HZ}  an equigenerated monomial $I$ is called {\em Freiman} if $\mu(I^2)= \ell(I)\mu(I)-{\ell(I)\choose 2}$. In the same paper we
characterized  Freiman ideals for  special classes of  principal Borel ideals and Veronese type ideals. One purpose of this paper is to give a
full classification of Freiman ideals for all   principal Borel  ideals and all Veronese type ideals with constant bound, thereby generalizing the results of
\cite{HZ}.

In \cite{HHZ}, subsequent to paper \cite{HZ}, a more systematic treatment of Freiman ideals was presented. Among several other characterizations
of the Freiman property, it was shown in \cite{HHZ} that $I$ is a Freiman ideal if and only if the fiber cone of $I$ has a $2$-linear resolution.
This is a particularly nice property which  implies that the defining ideal of the  fiber cone of $I$ is generated by quadrics. In the present
paper we use this characterization of Freiman ideals.

There is a remarkable property of equigenerated monomial ideals, called sortability, which guarantees that the defining relations of their fiber
cone is generated by the so-called sorting relations. The sorting relations are quadratic binomials and form a Gr\"obner basis with respect to a
suitable monomial order. Sortable sets of monomials  have been introduced by Sturmfels \cite{St}.  Some basic properties  of sortable ideals are
discussed  in  \cite{EH}. In Section $1$ we  associated with a sortable ideal $I$  a finite simple graph $G_{I,s}$, whose edges correspond to
the sortable pairs of the monomial generators of $I$, and prove in  Theorem~\ref{main}  that a sortable ideal $I$ is Freiman, if and only if
$G_{I,s}$ is chordal.  This will be our main tool to give in the following sections  a complete characterization of Freiman  principal Borel
ideals and of Freiman ideals of Veronese type with constant bound.

\section{Sortability and the Freiman property}

The aim of this section is to analyze what it means that a sortable  monomial ideal is  a Freiman ideal.

Let $K$ be a field and $S=K[x_1,\ldots,x_n]$ be the polynomial ring over $K$ in $n$ indeterminates.
Let $d$ be a positive integer, $S_d$ the $K$-vector space generated by the monomials of degree $d$ in $S$, and take two monomials $u,v\in S_d$. We
write $uv=x_{i_1}x_{i_2}\cdots x_{i_{2d}}$ with $1\leq i_1\leq i_2\leq \cdots \leq i_{2d}\leq n$,  and define
$$u'=x_{i_1}x_{i_3}\cdots x_{i_{2d-1}},\quad \text{and} \quad v'=x_{i_2}x_{i_4}\cdots x_{i_{2d}}.$$
The pair $(u',v')$ is called the {\it sorting} of $(u,v)$.

In this way we obtain a map
\[
\sort: S_d\times S_d \to S_d\times S_d,\ (u, v)\mapsto (u', v').
\]

This map is called the sorting operator.
For example, if $u=x_1x_3^{2}$
and $v=x_2^2x_3$, then $\sort(u,v)= (x_1x_2x_3, x_2x_3^2)$. A pair $(u,v)$ is called to be  {\em sorted} if $\sort(u,v)=(u,v)$ or
$\sort(u,v)=(v,u)$, otherwise it is called to {\em unsorted}. Notice that $\sort(u,v)=\sort(v,u)$.

\begin{Definition} \label{HHF}{\em A subset $B\subset S_d$ of monomials is called {\em sortable} if
$\sort(B\times B)\subset B\times B$.}
\end{Definition}

Let $I$ be an equigenerated monomial ideal. The unique minimal set of monomial generators of $I$ is denoted by $G(I)$. We call $I$ a {\em sortable
ideal}, if $G(I)$ is a sortable set. The defining ideal $J$ of the fiber cone $F(I)$ of a sortable ideal $I$ has very nice Gr\"obner basis.
Indeed, let $R$ be the polynomial ring $K[t_v\; | \; v \in G(I)]$, and $\phi: R \to F(I)$ be the $K$-algebra homomorphism which maps $t_v$ to $v$
for all $v \in G(I)$. We denote by $J$ the kernel of $\phi$. Then $F(I)=R/J$. Obviously, the quadratic binomials
$f_{(u,v)}=t_ut_v-t_{u^\prime}t_{v^\prime}$ with $(u,v)$ unsorted belong to $J$.

In \cite{DN}, De Negri noticed the following fact which easily follows from a theorem of
Sturmfels \cite{St} (see also  \cite[Theorem 6.16]{EH}), and  which is crucial for this paper.

\begin{Theorem}\label{algebra}
Let $I$ be a sortable ideal. Then there exists a monomial order $<$ on $R$ such that $\ini_<(f_{(u,v)})=t_ut_v$ for all unsorted pairs $(u,v)$
with $u,v\in G(I)$. Moreover, the set of binomials $\MG=\{f_{(u,v)}\:\; (u,v)\text{ is unsorted }\}$ is a Gr\"obner basis of the toric ideal $J$.
\end{Theorem}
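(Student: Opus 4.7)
The plan is to prove both parts of the theorem simultaneously via Buchberger's criterion, using a sorting-type monomial order on $R$. The order is chosen so that $\ini_<(f_{(u,v)}) = t_u t_v$ for every unsorted pair, after which the Gr\"obner basis assertion reduces to a confluence property of iterated sortings; this is the combinatorial heart of the matter.

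For the monomial order I would follow Sturmfels and use the ``sorting order'' on $R$. Concretely, encode each monomial $t_{u_1}\cdots t_{u_k}$ of $R$ as the $k\times d$ matrix whose $i$-th row is the sorted word of $u_i$, re-sort the rows lex-increasingly, and compare two such matrices in reading order. The combinatorial key is then: writing $uv = x_{i_1}\cdots x_{i_{2d}}$ in sorted order, the decomposition $(u',v') = (x_{i_1}x_{i_3}\cdots,\, x_{i_2}x_{i_4}\cdots)$ is the unique \emph{interleaved} one; any other decomposition (in particular the unsorted pair $(u,v)$ itself) yields a strictly larger row-sorted matrix. Hence $t_u t_v > t_{u'}t_{v'}$, so $\ini_<(f_{(u,v)}) = t_u t_v$.

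With the leading terms identified, I would apply Buchberger's criterion to $\MG$. Two leading monomials $t_u t_v$ and $t_p t_q$ have a common factor only when they share a $t$-variable, so the nontrivial S-pairs are of the form
\[
S\bigl(f_{(u,v)},\, f_{(u,w)}\bigr) \;=\; t_w\, t_{u'} t_{v'} \;-\; t_v\, t_{u''} t_{w''},
\]
where $(u',v') = \sort(u,v)$ and $(u'',w'') = \sort(u,w)$. Sortability of $G(I)$ ensures that every pair encountered in subsequent reductions stays within $G(I)$, so the sorting operator is always applicable. Reducing $S(f_{(u,v)},\,f_{(u,w)})$ to zero modulo $\MG$ is equivalent to the \emph{confluence of triple sorting}: starting from $(u,v,w)$ and iteratively sorting any unsorted adjacent pair, every maximal reduction chain ends at the same canonically sorted triple (the one obtained by sorting $uvw$ and extracting letters at positions $\equiv 1,\,2,\,3 \pmod{3}$).

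The main obstacle is establishing this confluence. Termination is immediate since each sorting strictly decreases the row-sorted matrix invariant; confluence then follows from Newman's lemma once local confluence is checked. Local confluence is the only genuinely combinatorial step: given a triple in which two adjacent pairs are both unsorted, one shows by a direct argument on the positions of the offending letters that the two possible sortings can be joined by further sortings. Granted this, both assertions of the theorem drop out of Buchberger's criterion.
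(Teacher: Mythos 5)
The paper does not prove this theorem; it is quoted from De Negri and Sturmfels (and \cite[Theorem 6.16]{EH}), so there is no internal proof to compare against. Your proposal is a reconstruction of Sturmfels' argument via Buchberger's criterion, and the overall architecture (choose a ``sorting order'', identify the leading terms, reduce Gr\"obner-basis-ness to confluence of iterated pairwise sorting, then invoke Newman's lemma) is indeed the standard route. However, there are two genuine problems.

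First, the monomial order you describe does not do what you claim. You say: encode $t_{u_1}\cdots t_{u_k}$ as the row-sorted matrix of sorted words, compare matrices in reading order, and assert that the interleaved decomposition is the \emph{minimal} one. Take $d=2$ and the fiber $x_1x_2x_3x_4$. The three degree-two preimages are $t_{x_1x_2}t_{x_3x_4}$, $t_{x_1x_3}t_{x_2x_4}$ and $t_{x_1x_4}t_{x_2x_3}$, with row-sorted matrices read as $1234$, $1324$, $1423$ respectively. The sorted pair is $(x_1x_3,x_2x_4)$, but in row-reading order it is \emph{not} minimal: $t_{x_1x_2}t_{x_3x_4}$ is strictly smaller. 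So with the order as written, $\ini_<(f_{(x_1x_2,\,x_3x_4)}) = t_{x_1x_3}t_{x_2x_4}$ rather than $t_{x_1x_2}t_{x_3x_4}$, contradicting your claim. (Reading the matrix \emph{column by column} instead does make the interleaved pair minimal, because then the reading word of the sorted pair is exactly the sorted word $x_{i_1}\cdots x_{i_{2d}}$, but then multiplicativity of the order becomes the thing that needs justification, and you do not address it. Sturmfels' actual sorting order is a weight order whose existence is itself a nontrivial part of his Theorem 14.2.)

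Second, even granting a correct order, the proposal reduces the Gr\"obner-basis claim to ``confluence of triple sorting'' and explicitly defers the local confluence check (``a direct argument on the positions of the offending letters''). That check is precisely the combinatorial content of the theorem, not a routine verification, so as written the argument begs the question it was supposed to resolve. The structure of the proof is right, but both of the key technical assertions --- that the chosen order makes every unsorted $t_ut_v$ a leading term, and that the sorting rewriting system is locally confluent --- remain unproved, and the first is in fact false for the order as you stated it.
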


With a sortable ideal $I$ we attach two graphs, namely the {\em sorted graph} $G_{I,s}$ of $I$,  and the {\em  unsorted graph} $G_{I,u}$ of $I$.
The vertex sets of  both graphs is  $G(I)$. Its edge sets are
\[
E(G_{I,s})=\{\{u,v\}\: \text{$u\neq v$, $(u,v)$  is sorted}\},
 \]
and
\[
E(G_{I,u})=\{\{u,v\}\: \text{$u\neq v$, $(u,v)$  is unsorted}\}.
\]
Theorem~\ref{algebra} implies that $\ini_<(J)$ is the edge ideal of $G_{I,u}$, where $J$ is the defining toric ideal of the fiber cone $F(I)$ of
$I$.

Then 

For a graph $G$ we denote by $G^c$ the {\em complementary graph} of $G$,  which is the graph with the same vertex set as that of $G$ and with
\[
E(G^{c})=\{\{u,v\}\\:\; \{u,v\}\not\in E(G)\}.
\]
Obviously one has $(G_{I,u})^c=G_{I,s}$.

\medskip
\cite{F1} one has that $h_2\geq 0$, see \cite[Corollary 2.6]{HMZ}. The ideal $I$ is called a {\em Freiman ideal}  if $h_2=0$.

Freiman ideals are  characterized by the following result shown in  \cite[Theorem 1.3]{HHZ}.

\begin{Theorem}
\label{characterize}
Let $I$ be any  equigenerated monomial ideal. Then $I$ is a Freiman ideal if and only if $F(I)$ is Cohen-Macaulay and the defining ideal of $F(I)$
has a $2$-linear resolution.
\end{Theorem}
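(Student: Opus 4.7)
The plan is to translate both conditions into the same numerical statement about the $h$-vector of the fiber cone $F(I)=R/J$, and then invoke the classical characterization of algebras of minimal multiplicity. First I would write the Hilbert series as
\[
\sum_{k\ge 0}\mu(I^k)\,t^k \;=\; \frac{h(t)}{(1-t)^{\ell(I)}},
\]
and match the first three coefficients against $\dim_K(F(I))_0=1$, $\dim_K(F(I))_1=\mu(I)$, $\dim_K(F(I))_2=\mu(I^2)$. This yields $h_0=1$, $h_1=\mu(I)-\ell(I)$, and $h_2=\mu(I^2)-\ell(I)\mu(I)+\binom{\ell(I)}{2}$, so that the Freiman property is exactly the vanishing $h_2=0$.

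The easy direction is then immediate. If $F(I)$ is Cohen--Macaulay and $J$ has a $2$-linear resolution, then $\reg(F(I))=\reg(J)-1=1$, and because the regularity of a Cohen--Macaulay standard graded $K$-algebra coincides with the degree of its $h$-polynomial, we have $h(t)=1+h_1t$, whence $h_2=0$.

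For the converse I would invoke the classical bound $e(A)\ge \codim(A)+1$ for any standard graded $K$-algebra $A$, together with its equality criterion: \emph{minimal multiplicity} $e(A)=\codim(A)+1$ holds precisely when $A$ is Cohen--Macaulay and its defining ideal has a $2$-linear resolution (the algebraic form of the classification of varieties of minimal degree). Since $\codim(F(I))=\mu(I)-\ell(I)=h_1$ and $e(F(I))=h(1)=1+h_1+h_2+\cdots$, what remains is to promote the Freiman vanishing $h_2=0$ to $h_i=0$ for every $i\ge 2$.

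This promotion is the main obstacle. My plan is to apply the Freiman inequality of \cite{HMZ} not only to $G(I)$ but inductively to each $G(I^k)$, so that equality at the first step triggers, via the rigidity part of Freiman's theorem in additive combinatorics, a rigid structural condition on the exponent vectors of $G(I)$ (an arithmetic progression in the one-variable case, and a suitable affine-scroll-like configuration in general) which is preserved under Minkowski sums. Once established, this persistent structure forces equality in the Freiman bound at every power and hence $h_i=0$ for all $i\ge 2$. Feeding this back into the minimal multiplicity criterion then produces both the Cohen--Macaulay property and the $2$-linear resolution simultaneously, completing the proof.
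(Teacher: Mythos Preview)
The paper does not prove this theorem; it quotes it from \cite[Theorem~1.3]{HHZ} and uses it as a black box in the proof of Theorem~\ref{main}. There is therefore no in-paper argument to compare your attempt against.

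On the merits of your sketch: the translation of the Freiman condition into $h_2=0$ and the easy direction are both correct and standard. The entire difficulty is concentrated in your ``promotion'' step, where you want $h_2=0$ to force $h_i=0$ for all $i\ge 2$ so that the minimal-multiplicity criterion for standard graded domains (legitimate here because $F(I)\cong K[G(I)]$ is a subring of a polynomial ring) can be invoked. Your plan for this---use the equality case of Freiman's additive inequality to pin down a rigid structure on the exponent set of $G(I)$ and then argue that this structure persists under iterated Minkowski sums---is consistent with the title and thrust of \cite{HHZ}, but as written it is only a program: you have not stated what the rigid structure actually is, nor proved its stability under sumsets, nor shown that it yields the claimed Hilbert function. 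Filling in precisely these details is the substance of \cite[Theorem~1.3]{HHZ}; without them the converse direction remains a plausible outline rather than a proof.
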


Now we are ready to prove the main result of this section.

\begin{Theorem}
\label{main}
Let $I$ be a sortable ideal. Then $I$ is  Freiman  if and only if the sorted graph $G_{I,s}$ of $I$ is chordal. In particular, if $G_{I,s}$
contains an induced  $t$-cycle with $t\geq 4$, then $I$ is not Freiman.
\end{Theorem}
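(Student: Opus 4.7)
The plan is to combine Theorem~\ref{characterize} with the quadratic Gr\"obner basis of Theorem~\ref{algebra} and Fr\"oberg's theorem on edge ideals with $2$-linear resolution. Theorem~\ref{algebra} identifies the initial ideal of $J$ in $R=K[t_v : v\in G(I)]$ as
\[
\ini_<(J) \;=\; (t_u t_v : \{u,v\}\in E(G_{I,u})) \;=\; I(G_{I,u}) \;=\; I\bigl((G_{I,s})^c\bigr),
\]
that is, the edge ideal of the complement of the sorted graph. Fr\"oberg's classical theorem asserts that for a finite simple graph $H$, the edge ideal $I(H)$ has a $2$-linear resolution if and only if $H^c$ is chordal. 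Applied here, $\ini_<(J)$ has a $2$-linear resolution if and only if $G_{I,s}$ is chordal.

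Next I would transfer this equivalence from $\ini_<(J)$ to $J$ itself. Both ideals are generated in degree $2$, and $\ini_<(J)$ is squarefree. The standard comparison $\beta_{ij}(R/J)\leq \beta_{ij}(R/\ini_<(J))$ yields at once that $J$ has a $2$-linear resolution whenever $\ini_<(J)$ does. For the reverse direction (needed for the non-Freiman half of the theorem) I would invoke the theorem of Conca--Varbaro: since $\ini_<(J)$ is squarefree, $\reg(J)=\reg(\ini_<(J))$, so the $2$-linear property transfers in both directions. Combined with Fr\"oberg, $J$ has a $2$-linear resolution if and only if $G_{I,s}$ is chordal. Theorem~\ref{characterize} also requires $F(I)=R/J$ to be Cohen--Macaulay; for a sortable ideal this is automatic, either by appealing to the general structure theory of sortable toric rings (they admit an ASL presentation on the distributive lattice of sorted monomials), or by noting that the Conca--Varbaro result equally transfers depth, so that Cohen--Macaulayness of $R/J$ reduces to that of the Stanley--Reisner ring of the clique complex of $G_{I,s}$.

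\textbf{Main obstacle.} The delicate step is the Cohen--Macaulayness of $F(I)$ in the chordal case: a $2$-linear resolution of $J$ does not by itself force $R/J$ to be Cohen--Macaulay, and chordality of $G_{I,s}$ does not on its own guarantee that its clique complex is even pure (simple examples with a chordal graph produce non-pure clique complexes whose Stanley--Reisner ring fails to be Cohen--Macaulay). The sortability hypothesis must therefore be invoked in an essential way here, either by citing a structural theorem that every sortable toric ring is Cohen--Macaulay, or by proving directly that the sorted graph of a sortable ideal with chordal $G_{I,s}$ has a pure, shellable clique complex. Once this point is settled, the ``in particular'' statement is immediate, since an induced $t$-cycle with $t\geq 4$ in $G_{I,s}$ obstructs chordality and hence the Freiman property.
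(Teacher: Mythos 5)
Your outline is structurally the same as the paper's: Theorem~\ref{algebra} gives a quadratic squarefree initial ideal $\ini_<(J)=I(G_{I,u})=I((G_{I,s})^c)$, Fr\"oberg translates $2$-linearity of $\ini_<(J)$ into chordality of $G_{I,s}$, and the Conca--Varbaro squarefree degeneration theorem transfers $2$-linearity between $J$ and $\ini_<(J)$ in both directions, which, combined with Theorem~\ref{characterize}, yields the equivalence. You also correctly flag that Cohen--Macaulayness of $F(I)$ is the remaining step that must be supplied and that it cannot be deduced from the $2$-linear resolution alone.

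However, your proposal does not actually close that gap, and the second route you sketch (reduce to the clique complex of $G_{I,s}$) is, as you yourself observe, problematic since chordality does not make the clique complex pure. The paper's resolution is shorter and does not involve chordality at all: since $\ini_<(J)$ is squarefree, a theorem of Sturmfels (see \cite[Corollary 4.26]{HHO}) shows that $F(I)$ is a \emph{normal} affine semigroup ring, and Hochster's theorem then gives Cohen--Macaulayness. Thus every sortable ideal has a Cohen--Macaulay fiber cone, unconditionally; this is precisely the ``structural theorem'' you gesture at in your first alternative but do not supply. You should replace the ASL/clique-complex speculation with the Sturmfels--Hochster chain; once that is in place your argument is complete. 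A minor additional note: for the easy direction you only need $\beta_{ij}(R/J)\le\beta_{ij}(R/\ini_<(J))$ as you say (the paper cites \cite[Theorem 3.3.4]{HH}); Conca--Varbaro is invoked only for the converse, exactly as in the paper.
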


\begin{proof}
Let $I$ be sortable,  and let $J$ be the defining toric ideal of $F(I)$.  By Theorem~\ref{algebra}, $\ini_<(J)$ is squarefree. By \cite{St} (see
also
\cite[Corollary 4.26]{HHO}) this implies that $F(I)$ is normal. Then by a theorem of Hochster \cite{Ho} it follows that $F(I)$ is Cohen-Macaulay.
As observed before, $\ini_<(J)$ is the edge ideal of $G_{I,u}$ and that $G_{I,u}^c=G_{I,s}$. Now we apply a famous and well-known theorem of
Fr\"oberg \cite{F}   and deduce that $\ini_<(J)$ has a $2$-linear resolution, due to the fact that  $G_{I,s}$ is chordal by assumption. This
implies that $J$ has $2$-linear resolution as well, see for example \cite[Theorem 3.3.4]{HH}. Thus $I$ is Freiman by Theorem~\ref{characterize}.

Conversely, assume that $I$ is Freiman. Then $J$ has a $2$-linear resolution. Since $\ini_<(J)$ is squarefree, it follows from \cite[Corollary
2.7]{CV} that $\ini_<(J)$ has a $2$-linear resolution as well. Again applying the theorem of Fr\"oberg it follows that $G_{I,s}$  is chordal.
\end{proof}

In the following sections we apply this theorem to special classes of monomial ideals.

\section{Freiman principal Borel ideals}

A monomial ideal $I\subset S$ is called {\em strongly stable}, if for all $v\in G(I)$ and all $j\in \supp(v)$ it follows that $x_i(v/x_j)\in  I$
for all $i<j$.  Given monomials $v_1,\ldots,v_m\in S$,  there exists a unique smallest strongly stable ideal, denoted $B(v_1,\ldots,v_m)$,  which
contains  monomials $v_1,\ldots,v_m$. The monomial ideal $I$ is called a {\em principal Borel ideal}  if
$I=B(v)$ for some monomial $v\in S$.

\begin{Example}
\label{National Day}{\em
Let   $I=B(v)$, where  $v=x_3^2$, then  $G(I)=\{x_1^2,x_1x_2,x_1x_3,x_2^2,x_2x_3,x_3^2\}$.  The edge set of the  sorted graph  of $I$ is
\begin{eqnarray*}
&&\{\{x_1^2,x_1x_2\},\{x_1^2,x_1x_3\},\{x_1x_2,x_1x_3\}, \{x_1x_2,x_2^2\},\{x_1x_2,x_2x_3\},\\
&&\{x_1x_3,x_2x_3\},\{x_1x_3,x_3^2\}, \{x_2^2,x_2x_3\},\{x_2x_3,x_3^2\}\}.
\end{eqnarray*}
}
\end{Example}

The following theorem characterizes all Freiman ideals among the principal Borel ideals.

\begin{Theorem}\label{ $0$-spreadprincipalBore}
 Let $d\geq 2$ and $n\geq 3$ be two integers, and  $S=K[x_1,\ldots,x_n]$ the polynomial ring over $K$ in $n$ indeterminates. Let
 $u$ be a monomial of degree $d$ in $S$. Then

 \begin{enumerate}
\item[(a)]   If $d=2$, then $B(u)$ is  Freiman if and only if $u$ is a minimal monomial generator of $(x_1,x_2,x_3)^2$, or
    $x_1(x_4,\ldots,x_n)$, or
 $x_2(x_4,\ldots,x_n)$.
\item[(b)] If $d=3$, then $B(u)$ is  Freiman  if and only if   $u$  is a minimal monomial generator of $x_1(x_1,x_2,x_3)^2$, or
    $x_1(x_1,x_2)x_i$ with $i>3$, or $x_2^2(x_2,\ldots,x_n)$.
 \item[(c)] If $d\geq 4$, then  $B(u)$ is  Freiman  if and only if  $u=x_{1}^{d-2}x_{3}^2$,  or $u$  is a minimal  monomial  generators of
     $x_1^{d-1}(x_1,\ldots,x_n)$, or  $x_{1}^{d-r-1}x_{2}^{r}(x_i,\ldots,x_n)$ where $1\leq r\leq d-1$ and $i\geq 2$.
  \end{enumerate}
  \end{Theorem}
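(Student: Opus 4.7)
The plan is to invoke Theorem~\ref{main} and reduce the statement to a chordality question about the sorted graph of the principal Borel ideal. Principal Borel ideals are strongly stable, and strongly stable sets of monomials in a fixed degree are sortable (see, e.g., \cite[Ch.~6]{EH}); hence Theorem~\ref{main} applies, and $B(u)$ is Freiman if and only if the sorted graph $G_{B(u),s}$ is chordal. Writing $u = x_{a_1}\cdots x_{a_d}$ with $a_1 \le \cdots \le a_d$, one has
\[
G(B(u)) = \{x_{b_1}\cdots x_{b_d} \mid 1 \le b_1 \le \cdots \le b_d,\ b_i \le a_i \text{ for all } i\},
\]
and two generators $v = x_{j_1}\cdots x_{j_d}$, $w = x_{k_1}\cdots x_{k_d}$ (each written in nondecreasing index order) form a sorted edge precisely when their index sequences interlace. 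This interlacing criterion is the workhorse of both directions.

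For the ``$\Rightarrow$'' direction, for every $u$ outside the listed families I would exhibit an induced $4$-cycle in $G_{B(u),s}$, which obstructs chordality. The prototype in case (a) is $u = x_a x_b$ with $a \ge 3$ and $b \ge 4$: the quadruple $\{x_1 x_2,\ x_1 x_b,\ x_a x_b,\ x_2 x_a\}$ lies in $B(u)$, its four consecutive pairs interlace (hence are sorted edges), while the two diagonals $(x_1 x_2, x_a x_b)$ and $(x_1 x_b, x_2 x_a)$ share the product $x_1 x_2 x_a x_b$ which sorts to $(x_1 x_a, x_2 x_b)$, so they are unsorted (non-edges). Analogous ``rectangle'' quadruples, obtained by multiplying the degree-$2$ prototype by appropriate $x_1$'s or $x_2$'s and choosing the four corners from the extra indices that keep $u$ out of the allowed list, provide an induced $4$-cycle in each excluded sub-case of (b) and (c).

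For the ``$\Leftarrow$'' direction, I would verify chordality of $G_{B(u),s}$ for every $u$ on the list. The finite-support cases --- the minimal generators of $(x_1,x_2,x_3)^2$ in (a), of $x_1(x_1,x_2,x_3)^2$ in (b), and the single monomial $u = x_1^{d-2} x_3^2$ in (c) --- have small generator sets, and chordality follows from a direct inspection together with an explicit perfect elimination ordering. In the parametric families --- $u = x_1 x_i$ or $x_2 x_i$ in (a); $u = x_1^2 x_i$, $x_1 x_2 x_i$ or $x_2^2 x_i$ in (b); $u = x_1^{d-1} x_i$ or $x_1^{d-r-1} x_2^r x_j$ in (c) --- the generators of $B(u)$ have support contained in a set of size at most three (namely $x_1$, $x_2$, and one further variable). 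This restricts the interlacing structure enough that ordering the generators lexicographically by the tuple (number of $x_1$'s, number of $x_2$'s, \ldots) yields a perfect elimination ordering of $G_{B(u),s}$.

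The main obstacle is the systematic case analysis on the ``$\Rightarrow$'' side: the monomials $u$ excluded in case (c) split into several structural sub-types ($|\supp u| \ge 4$; or $|\supp u| = 3$ with an exponent pattern other than $x_1^{d-r-1} x_2^r x_j$; or $|\supp u| = 2$ but outside the two allowed exceptions), and each sub-type must be handled by its own explicit $4$-cycle witness. On the ``$\Leftarrow$'' side, the analogous technical task is to establish uniformly, over all admissible pairs $(r,j)$, that the proposed elimination ordering for the family $u = x_1^{d-r-1} x_2^r x_j$ in case (c) is perfect --- rather than splitting into a separate argument for each $(r,j)$.
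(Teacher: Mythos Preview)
Your outline is sound and follows the same starting point as the paper (sortability of strongly stable sets, then Theorem~\ref{main}), but from there the two arguments diverge noticeably. The paper's proof is organized around the reduction
\[
B(v)\text{ is Freiman}\iff B(x_1^k v)\text{ is Freiman},
\]
which lets it strip off powers of $x_1$ and push most of the ``$\Leftarrow$'' direction back to the degree-$2$ case and to results already established in \cite{HZ}; for the ``$\Rightarrow$'' direction it exhibits a single induced $6$-cycle (on the vertices $x_1^{d-2}x_2^2,\,x_1^{d-1}x_2,\,x_1^{d-1}x_3,\,x_1^{d-2}x_3^2,\,x_1^{d-3}x_2x_3^2,\,x_1^{d-3}x_2^2x_3$) that sits inside $G_{B(u),s}$ for every excluded $u$ after the reduction. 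Your plan instead avoids both the $x_1$-reduction and the citations to \cite{HZ}, checking chordality directly via perfect elimination orderings and obstructing with $4$-cycles. This is more self-contained but buys you a longer case analysis, exactly as you anticipate.

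One concrete gap: your prototype $4$-cycle $\{x_1x_2,\,x_1x_b,\,x_ax_b,\,x_2x_a\}$ is \emph{not} induced when $a=b$. In that case the diagonal pair $(x_1x_b,\,x_2x_a)=(x_1x_a,\,x_2x_a)$ has product $x_1x_2x_a^2$, which sorts to $(x_1x_a,\,x_2x_a)$, so the diagonal is an edge and the cycle has a chord. Thus for $u=x_a^2$ with $a\ge 4$ (which is genuinely excluded in part~(a)) you need a different witness, for instance $\{x_1x_2,\,x_1x_4,\,x_3x_4,\,x_2x_3\}\subset G(B(x_a^2))$. The same caveat propagates to the higher-degree ``rectangles'' you build by multiplying through by powers of $x_1$ or $x_2$: you should keep the two free indices distinct. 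With that correction the strategy goes through.
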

  \begin{proof}
(a) is shown in  \cite[Theorem 4.2 (a)]{HZ}.

(b) Let   $v\in S$ be a monomial. Then
\begin{eqnarray}
\label{fact}
\text{$B(v)$ is Freiman if and only if $B(x_1^kv)$ is Freiman.}
\end{eqnarray}
 We will use this fact several times in the proof. By applying (\ref{fact})  and the result in (a), we obtain that
$B(u)$ is  Freiman  if  $u$ is a minimal monomial generator of $x_1(x_1,x_2,x_3)^2$, or $x_1(x_1,x_2)x_i$ with $i>3$, and
by  \cite[Theorem 4.2 (b)]{HZ}, we see that $B(u)$ is  Freiman  if  $u$  is a minimal monomial generator of $x_2^2(x_2,\ldots,x_n)$.

It remains to be shown that $B(u)$ is not Freiman if $u$ is different from the monomials listed (b). Then  $u$ is a minimal monomial generator of
the following ideals:
(i) $x_1(x_4,\ldots,x_n)^2$, or (ii) $x_1x_3(x_4,\ldots,x_n)$, or (iii) $x_2(x_3,\ldots,x_n)^2$, or (iv) $(x_3,\ldots,x_n)^3$.
By (\ref{fact})  we have  to show that $B(u)$ is not Freiman if  $u$ is a minimal monomial generator of
the following ideals:

(i) $(x_4,\ldots,x_n)^2$, or (ii) $x_3(x_4,\ldots,x_n)$, or (iii) $x_2(x_3,\ldots,x_n)^2$, or (iv) $(x_3,\ldots,x_n)^3$.

For case (i) or (ii),  $B(u)$ is not Freiman by (a).
For case (iii) or (iv),  the elements $u_1=x_1x_2^2$, $u_2=x_1^2x_2$, $u_3=x_1^2x_3$, $u_4=x_1x_3^2$, $u_5=x_2x_3^2$ and  $u_6=x_2^2x_3$ are the
vertices of an induced $6$-cycle of  $G_{B(u),s}$. Indeed, $(u_i,u_{i+1})$ for $i=1,\ldots, 5$  and $(u_1,u_6)$ are sorted pairs, while the pairs
corresponding to the chords of the $6$-cycle are not sorted. This show that it is  is an induced $6$-cycle.
Hence $B(u)$ is not Freiman by Theorem~\ref{main}.

(c) Suppose first that   $u=x_{1}^{d-2}x_{3}^2$,  or  $u$ is a minimal monomial generator of $x_1^{d-1}(x_1,\ldots,x_n)$, or
$x_{1}^{d-r-1}x_{2}^{r}(x_i,\ldots,x_n)$ where $1\leq r\leq d-1$ and $i\geq 2$. By (\ref{fact}) it is enough to consider the  case $u=x_{3}^2$,
or $u$ is a minimal monomial generator of $(x_2,\ldots,x_n)$, or  $x_{2}^{r}(x_i,\ldots,x_n)$ where $1\leq r\leq d-1$ and $i\geq 2$. It follows
from  \cite[Theorem 3.3 and Theorem 4.2]{HZ} that   $B(u)$ is  Freiman.

Now we will  prove that $B(u)$ is not Freiman if $u$ is different from the monomials listed (c). Then $u$ is a minimal monomial generator of
$x_1^{a}x_2^{b}(x_3,\ldots,x_n)^{d-(a+b)}$ where $0\leq a+b\leq d-2$. By applying (\ref{fact}) we have to show  that if  $u$ is a minimal monomial
generator of the following ideals:
 (i) $x_2^{d-\ell}(x_3,\ldots,x_n)^{\ell}$ with $\ell\geq 2$, or  (ii) $x_3(x_3,\ldots,x_n)^{\ell}$ with $\ell=2,\ldots,d-1$, or  (iii)
 $(x_3,\ldots,x_n)^{\ell}$ with $\ell=3,\ldots,d$, then $B(u)$ is not  Freiman.

In all these cases, the elements $x_1^{d-2}x_2^2$, $x_1^{d-1}x_2$, $x_1^{d-1}x_3$, $x_1^{d-2}x_3^2$, $x_1^{d-3}x_2x_3^2$ and  $x_1^{d-3}x_2^2x_3$
are the vertices of an induced $6$-cycle of  $G_{B(u),s}$. Hence $B(u)$ is not Freiman, as desired.

\end{proof}

\section{Freiman ideals of Veronese type with constant bound}

Given positive integers  $n$, $d$ and a sequence $\ab$ of integers $1\leq a_1\leq \cdots \leq a_n\leq d$ with $\sum_{i=1}^na_i>d$, one defines the
monomial ideal  $I_{\ab,n,d}\subset S=K[x_1,\ldots,x_n]$ with
\[
G(I_{\ab,n,d})=\{x_1^{b_1}x_2^{b_2}\cdots x_n^{b_n}\; \mid \; \sum_{i=1}^nb_i=d \text{ and  $b_i\leq a_i$ for $i=1,\ldots,n$}\}.
\]
In this section we only consider the case that  $a_i=k$ for $i=1,\ldots,n$, and denote the corresponding Veronese type ideal with constant bound $k$  by $I_{k,n,d}$.  Note that $I_{k,n,d}=I_{d,n,d}$ if $k>d$. Therefore, we may  always assume that $k\leq d$.

\medskip
\begin{Lemma}
\label{specialk}
The ideal $I_{k,n,d}$ is sortable.
\end{Lemma}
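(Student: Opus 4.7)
The plan is to unwind the definition of sortability and show directly that for any two monomial generators $u,v \in G(I_{k,n,d})$, the sorted pair $(u',v')$ again satisfies the constant-bound constraint. There is no reason to invoke any high-powered tool; the argument is combinatorial, based on how the sorting operator distributes exponents.

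First I would fix $u = x_1^{a_1}\cdots x_n^{a_n}$ and $v = x_1^{b_1}\cdots x_n^{b_n}$ in $G(I_{k,n,d})$, so that $a_j, b_j \leq k$ for every $j$. Writing $uv = x_{i_1}x_{i_2}\cdots x_{i_{2d}}$ with $i_1 \leq i_2 \leq \cdots \leq i_{2d}$, each index $j$ occurs exactly $c_j := a_j + b_j$ times among $i_1,\ldots,i_{2d}$, and crucially, these $c_j$ occurrences form a block of consecutive positions in the word.

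The key observation is the following. By definition $u' = x_{i_1}x_{i_3}\cdots x_{i_{2d-1}}$ takes the odd-indexed letters and $v' = x_{i_2}x_{i_4}\cdots x_{i_{2d}}$ takes the even-indexed letters. Since the block of positions occupied by $x_j$ is an interval of length $c_j$, exactly $\lceil c_j/2\rceil$ of them are odd and $\lfloor c_j/2\rfloor$ are even (or vice versa, depending on the parity of the starting position of the block). Hence the exponent of $x_j$ in $u'$ and in $v'$ is either $\lceil c_j/2\rceil$ or $\lfloor c_j/2\rfloor$. Because $c_j = a_j + b_j \leq 2k$, both values are at most $k$, so $u'$ and $v'$ lie in $G(I_{k,n,d})$. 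Since $u'$ and $v'$ automatically have degree $d$, this proves $\sort(G(I_{k,n,d}) \times G(I_{k,n,d})) \subseteq G(I_{k,n,d}) \times G(I_{k,n,d})$, which is precisely sortability.

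There is no genuine obstacle here; the only subtlety worth stating carefully is the fact that all occurrences of a given $x_j$ in the sorted word $x_{i_1}\cdots x_{i_{2d}}$ form a contiguous block, which is what guarantees the exponents in $u'$ and $v'$ differ by at most one and hence are bounded by $\lceil (a_j+b_j)/2\rceil \leq k$.
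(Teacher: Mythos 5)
Your proof is correct and is essentially the same argument as the paper's: both identify the contiguous block of positions occupied by $x_j$ in the sorted word $uv = x_{i_1}\cdots x_{i_{2d}}$, note that the block has length $a_j + b_j \leq 2k$, and conclude that the number of odd (resp.\ even) positions in that block, which gives the exponent of $x_j$ in $u'$ (resp.\ $v'$), is at most $k$. Your phrasing in terms of $\lceil c_j/2\rceil$ and $\lfloor c_j/2\rfloor$ is a slightly more explicit way of stating the same count.
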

 \begin{proof} Let $u,v\in G(I_{k,n,d})$, and let $\sort(u,v)= (u',v')$. Let $uv=x_{i_1}x_{i_2}\ldots x_{i_{2d}}$. Let $j$ be an integer with $1\leq j\leq n$,  and let $\ell_1\leq \ell_2$ be the integer with the property that $i_\ell=j$ if and only if $\ell_1\leq \ell \leq \ell_2$. Then $\ell_2-\ell_1+1\leq 2k$. Let $u'=x_1^{c_1}\cdots x_n^{c_n}$.  Then $c_j=|\{\ell\:\; \ell_1\leq \ell \leq \ell_2 \text{ and } \ell \text{ is odd}\}|$. It follows that $c_j\leq k$ for all $j$, and this implies that $u'\in G(I_{k,n,d})$, Similarly, $v'\in G(I_{k,n,d})$.
\end{proof}

Since $I_{k,n,d}$ is sortable, we may apply  Theorem~\ref{main} to check which of the ideals $I_{k,n,d}$  are Freiman.

 \medskip
In the following proofs  we will use

\begin{Lemma}
  \label{variable} Let $A=K[x_1,\ldots,x_n]$ and $B=A[y]$ be polynomial rings over a field
$K$. Let $I=(u_1,\ldots, u_m)\subset A$ be a  monomial ideal generated in degree $d$,  and $J\subset B $ a monomial ideal generated in degree $d+1$ with  $(yu_1,\ldots, yu_m)\subset J$.  If $I$ is not Freiman, then $J$ is not Freiman.

 \end{Lemma}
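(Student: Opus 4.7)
I would prove the contrapositive: assume $J$ is Freiman and deduce that $I$ is Freiman. By Theorem~\ref{characterize}, the Freiman property of $J$ is equivalent to the fiber cone $F(J)$ being Cohen--Macaulay with defining toric ideal $P_J$ admitting a $2$-linear resolution; my goal is to transfer both properties from $F(J)$ (and $P_J$) to $F(I)$ (and $P_I$), and then re-apply Theorem~\ref{characterize} to conclude that $I$ is Freiman.

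First I would realize $F(I)$ as a graded $K$-subalgebra of $F(J)$. Since $J$ is equigenerated in degree $d+1$, each $yu_i$ of degree $d+1$ is forced to be a minimal generator of $J$, so $K[yu_1,\ldots,yu_m] \subseteq F(J)$. The $K$-algebra map $F(I)=K[u_1,\ldots,u_m]\to F(J)$ sending $u_i\mapsto yu_i$ is a graded homomorphism (both sides assign degree $1$ to these generators), and it is injective: a homogeneous relation $\sum_{|\alpha|=k} c_\alpha u^\alpha=0$ in $F(I)$ maps to $y^k\sum c_\alpha u^\alpha$ in $B$, which vanishes iff the original coefficient vanishes in $A$, since $y$ is transcendental and one can separate by $y$-degree. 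Passing to presentations, $R_I:=K[t_u:u\in G(I)]\subset R_J:=K[t_w:w\in G(J)]$, and the same $y$-degree argument yields $P_I=P_J\cap R_I$.

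The main step is to show that ``$F(J)$ Cohen--Macaulay with $P_J$ admitting a $2$-linear resolution'' implies the same for $F(I)$ and $P_I$. My primary strategy is to construct an algebra retraction $\pi\:F(J)\twoheadrightarrow F(I)$ that restricts to the identity on $F(I)$, using the auxiliary grading on $B=A[y]$ with $\deg(y)=1$ and $\deg(x_i)=0$: define $\pi$ on degree-$1$ generators by sending each $yu_i$ to itself and sending every other generator of $F(J)$ to $0$, then extend multiplicatively. Under the $y$-grading, $F(I)$ sits as the ``diagonal'' subalgebra where the $F$-degree equals the $y$-degree, which suggests this projection is the natural candidate. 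Once $\pi$ is shown to be a well-defined graded $K$-algebra homomorphism, standard results on algebra retracts (cf.~\cite{HH}) transfer both Cohen--Macaulayness and the $2$-linear resolution from $F(J)$ to $F(I)$, and Theorem~\ref{characterize} then gives that $I$ is Freiman.

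The main obstacle will be verifying that $\pi$ respects the toric relations in $P_J$, specifically when $G(J)$ contains ``extra'' generators of the form $w=yv$ with $v\in A_d\setminus G(I)$: such a $w$ lies in $y$-degree $1$ just as the $yu_i$ do, and a toric relation equating $w\cdot(\text{stuff})$ with a pure product of $yu_i$'s need not be killed by the naive projection $\pi(w)=0$. If this obstacle is insurmountable in the general case, a fallback is to argue via the $h$-vector characterization: from $F(I)_k\hookrightarrow F(J)_k$ one immediately has $\mu(I^k)\leq\mu(J^k)$, and using the bound $\ell(J)\leq\ell(I)+(\mu(J)-\mu(I))$ (each extra generator raises the transcendence degree by at most one) one would attempt to bootstrap the formula $h_2=\mu(-^2)-\ell(-)\mu(-)+\binom{\ell(-)}{2}$ to conclude $h_2(F(J))\geq h_2(F(I))>0$, bypassing the retract construction.
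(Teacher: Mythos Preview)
Your approach is substantially different from the paper's and, as it stands, does not close. The paper argues directly (not by contrapositive) via Theorem~\ref{main}: since $I$ is not Freiman, $G_{I,s}$ contains an induced $t$-cycle on vertices $u_{i_1},\ldots,u_{i_t}$ with $t\geq 4$; because a pair $(u,v)$ is sorted if and only if $(yu,yv)$ is sorted (the two copies of the new variable $y$ occupy the last two slots of the sorted word and split evenly), the elements $yu_{i_1},\ldots,yu_{i_t}\in G(J)$ form an induced $t$-cycle of $G_{J,s}$, and hence $J$ is not Freiman. This is a three-line argument. Note that it tacitly uses that both $I$ and $J$ are sortable, so that Theorem~\ref{main} applies in each direction; that hypothesis holds in every application the paper makes of the lemma.

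Your route tries to prove the lemma without sortability, which is more ambitious, but both of your strategies have real gaps. The retraction obstacle you flag is fatal exactly as you suspect: if $G(J)$ contains $w=yv$ with $v\in A_d\setminus G(I)$ and there is a toric relation $(yu_i)(yu_j)=w\cdot w'$ in $P_J$, then sending $w\mapsto 0$ while fixing the $yu_i$ cannot define a ring homomorphism, so no retraction exists in general. The numerical fallback does not close either. From $\mu(J^2)\geq\mu(I^2)$ alone, deducing $h_2(F(J))\geq h_2(F(I))$ would require
\[
\ell(I)\mu(I)-\binom{\ell(I)}{2}\ \geq\ \ell(J)\mu(J)-\binom{\ell(J)}{2},
\]
but along the admissible step $(\ell,\mu)\mapsto(\ell+1,\mu+1)$ (compatible with your bound $\ell(J)\leq\ell(I)+(\mu(J)-\mu(I))$) the quantity $\ell\mu-\binom{\ell}{2}$ \emph{increases} by $\mu+1$, so the inequality goes the wrong way. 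Neither the $y$-grading retraction nor the $h$-vector bootstrap survives; the sorted-graph/chordality criterion is what makes the paper's proof immediate.
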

  \begin{proof} Since $I$ is not Freiman,  there exists  an induced   $t$-cycle $C_t$ with $t\geq 4$ of  $G_{I,s}$. Let  $\{u_{i_1},\ldots, u_{i_t}\}$ be the vertex set of $C_{t}$, then  $yu_{i_1},\ldots, yu_{i_t}\in J$. It follows that the
  elements   $yu_{i_1},\ldots, yu_{i_t}$  form the vertices of an induced  $t$-cycle of $G_{J,s}$.
  Therefore, $J$ is not Freiman.
 \end{proof}

The following theorem classifies the Freiman ideals of the form $I_{1,n,d}$.

\begin{Theorem}
\label{specialk}
The ideal $I_{1,n,d}$ is Freiman, if and only if one of the following conditions holds:
 \begin{enumerate}
\item[(a)] $n=2$ and  $d=1$.
\item[(b)] $n\geq 3$ and  $d=1$, or $d=n-1$.
\end{enumerate}
\end{Theorem}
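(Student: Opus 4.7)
The plan is to apply Theorem~\ref{main} to the sortable ideal $I_{1,n,d}$ and decide precisely when its sorted graph $G_{I_{1,n,d},s}$ is chordal. Since the meaningful parameter range is $n\ge 2$ and $1\le d\le n-1$, the two directions of the equivalence correspond to the Freiman cases ($d=1$, or $d=n-1$ with $n\ge 3$) and the non-Freiman cases ($n\ge 4$, $2\le d\le n-2$).

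For the Freiman direction, first consider $d=1$. Then $I_{1,n,1}=(x_1,\dots,x_n)$, every pair $(x_i,x_j)$ is trivially sorted, so $G_{I,s}=K_n$ is chordal and Theorem~\ref{main} gives the Freiman property; this settles (a) together with the $d=1$ half of (b). For $d=n-1$ with $n\ge 3$, the generators are $u_i=x_1\cdots\widehat{x_i}\cdots x_n$ for $i=1,\dots,n$, and for $i<j$ the product is
\[
u_iu_j=x_1^2\cdots x_{i-1}^2\,x_i\,x_{i+1}^2\cdots x_{j-1}^2\,x_j\,x_{j+1}^2\cdots x_n^2.
\]
Reading off the odd- and even-indexed positions of its sorted index sequence will yield $u'=u_j$ and $v'=u_i$, so every pair of generators is sorted, $G_{I,s}=K_n$ is chordal, and $I$ is Freiman.

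For the non-Freiman direction, I proceed by induction on $d$. In the base case $d=2$ with $n\ge 4$, the four monomials $x_1x_2,\,x_1x_4,\,x_3x_4,\,x_2x_3$ form an induced $4$-cycle in $G_{I_{1,n,2},s}$: each consecutive pair shares a variable and is therefore sorted, while both diagonals $\{x_1x_2,x_3x_4\}$ and $\{x_1x_4,x_2x_3\}$ satisfy $\sort=(x_1x_3,x_2x_4)$ and are unsorted. Theorem~\ref{main} then gives that $I_{1,n,2}$ is not Freiman. For the inductive step $d\ge 3$, apply Lemma~\ref{variable} with $A=K[x_1,\dots,x_{n-1}]$, $B=A[x_n]$, $I=I_{1,n-1,d-1}\subset A$, and $J=I_{1,n,d}\subset B$. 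Multiplying any squarefree generator of $I_{1,n-1,d-1}$ by $x_n$ produces a squarefree degree-$d$ monomial in $G(J)$, so the hypothesis of Lemma~\ref{variable} holds. The parameters satisfy $2\le d-1\le(n-1)-2$, so the inductive hypothesis gives that $I_{1,n-1,d-1}$ is not Freiman, and Lemma~\ref{variable} transfers this to $I_{1,n,d}$.

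The main step requiring care is the index bookkeeping in the $d=n-1$ case: one must argue cleanly that the odd- and even-position subsequences of the sorted index string of $u_iu_j$ are exactly the supports of $u_j$ and $u_i$ respectively. The rest of the argument is a direct application of the tools already developed in Sections~1 and~3.
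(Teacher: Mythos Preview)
Your proof is correct. The Freiman direction is identical to the paper's: both observe that for $d=1$ and for $d=n-1$ the sorted graph is the complete graph $K_n$ and invoke Theorem~\ref{main}.

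For the non-Freiman direction you take a slightly different route. The paper handles all $2\le d\le n-2$ at once by exhibiting the explicit induced $4$-cycle
\[
w\,x_{n-3}x_{n-2},\quad w\,x_{n-3}x_{n},\quad w\,x_{n-1}x_{n},\quad w\,x_{n-2}x_{n-1},
\qquad w=\prod_{j=4}^{d+1}x_{n-j},
\]
whereas you set up an induction on $d$ with base case $d=2$ and inductive step via Lemma~\ref{variable} applied to $I_{1,n-1,d-1}\subset K[x_1,\dots,x_{n-1}]$ and $J=I_{1,n,d}$. Unwinding your induction produces exactly such an explicit $4$-cycle (your base cycle $x_1x_2,\,x_1x_4,\,x_3x_4,\,x_2x_3$ multiplied by $x_5,\dots,x_{d+2}$), so the two arguments are really the same idea in different packaging. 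The paper's version is more concise; your version has the advantage of reusing Lemma~\ref{variable}, which the paper only invokes later in the $k=2$ case.
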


 \begin{proof} If $d=1$ or $d=n-1$, then the sorted graph $G_{I_{1,n,d},s}$  is a complete graph. Thus   Theorem  \ref{main} implies that $I_{1,n,d}$  is  Freiman.

It remains to be shown that $I_{1,n,d}$  is not Freiman if $n\geq 4$ and $2\leq d\leq n-2$. In this case, the elements  $(\prod\limits_{j=4}^{d+1}x_{n-j})x_{n-3}x_{n-2}$, $(\prod\limits_{j=4}^{d+1}x_{n-j})x_{n-3}x_n$,
$(\prod\limits_{j=4}^{d+1}x_{n-j})x_{n-1}x_n$, and $(\prod\limits_{j=4}^{d+1}x_{n-j})x_{n-2}x_{n-1}$,  are the vertices of an induced  $4$-cycle of  $G_{I_{1,n,d},s}$,  where  $\prod\limits_{j=4}^{d+1}x_{n-j}=1$ if $d=2$.
It follows that $I_{1,n,d}$ is not Freiman.
\end{proof}

Next we consider the case $k=2$.

\begin{Theorem}
\label{k=2}
The ideal $I_{2,n,d}$ is Freiman if and only if one of the following conditions holds:
 \begin{enumerate}
\item[(a)] $n=2$ and  $d=2$, or $d=3$.
\item[(b)] $n=3$ and  $d=2$, or $d=4$, or $d=5$.
\item[(c)] $n\geq 4$ and  $d=2n-1$.
\end{enumerate}
\end{Theorem}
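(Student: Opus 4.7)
The plan is to apply Theorem~\ref{main}: since $I_{2,n,d}$ is sortable by the preceding lemma, it is Freiman if and only if the sorted graph $G_{I_{2,n,d},s}$ is chordal. For the Freiman cases (a), (b), (c), I will exhibit chordality; for the remaining pairs $(n,d)$ with $n\geq 3$ and $2\leq d\leq 2n-1$, I will construct an induced cycle of length at least $4$.

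Several Freiman subcases are immediate: $n=2,d=2$ and $n=3,d=2$ are covered by Theorem~\ref{ $0$-spreadprincipalBore}(a), and $n=2,d=3$ has only two generators forming a single sorted pair. The uniform case $d=2n-1$ (any $n\geq 2$), which simultaneously handles $n=3,d=5$ and all of~(c), is the substantive one: the ideal has exactly the $n$ generators $e_j := x_1^2\cdots x_n^2/x_j$, and a direct block-by-block count of odd versus even positions in the sorted word of $e_ie_j$ shows $\sort(e_i,e_j)=(e_j,e_i)$ for every $i<j$. Hence $G_{I_{2,n,2n-1},s}$ is the complete graph $K_n$, which is chordal. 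The remaining subcase is $n=3,d=4$: I would compute, among the six generators $v_{ij}=x_i^2x_j^2$ ($i<j$) and $w_k=x_1x_2x_3\cdot x_k$, all $15$ pairs, and show that the sorted graph consists of the triangle $\{w_1w_2,w_1w_3,w_2w_3\}$ together with the six edges $\{v_{ij},w_k\}$ for $k\in\{i,j\}$; inspection of subsets of size $4$ and $5$ then confirms that no induced cycle of length $\geq 4$ exists.

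For the non-Freiman direction, the key instance is $n=4,d=2$: a short calculation shows that $x_1x_2,x_2x_3,x_3x_4,x_1x_4$ form an induced $4$-cycle in $G_{I_{2,4,2},s}$, since consecutive pairs are sorted while both diagonals $(x_1x_2,x_3x_4)$ and $(x_1x_4,x_2x_3)$ sort to $(x_1x_3,x_2x_4)$. For $n\geq 4$ and $2\leq d\leq 2n-2$, I would inflate this $4$-cycle: choose a monomial $m$ of degree $d-2$ with $x_i$-exponent at most $1$ for $i\leq 4$ and at most $2$ for $i>4$. The maximum such degree is $4\cdot 1+(n-4)\cdot 2=2n-4$, matching the constraint $d\leq 2n-2$, so $m$ exists; the four products $m\cdot x_1x_2,m\cdot x_2x_3,m\cdot x_3x_4,m\cdot x_1x_4$ lie in $G(I_{2,n,d})$ and form the required induced $4$-cycle. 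The one exceptional case $n=3,d=3$ falls outside this template, and there I would directly verify that the six generators $x_1^2x_2,x_1x_2^2,x_2^2x_3,x_2x_3^2,x_1x_3^2,x_1^2x_3$ form an induced $6$-cycle, noting that consecutive pairs around this cycle are sorted while each of the nine chords sorts to a pair involving the seventh generator $x_1x_2x_3$.

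The main obstacle is justifying the inflation step. Lemma~\ref{variable} handles only multiplication by a single new variable, so I would first establish the following common-factor generalization: for any monomials $u,v$ of equal degree and any monomial $m$,
\[
\sort(mu,mv)\;=\;(m u',\,m v'),\qquad\text{where } (u',v')=\sort(u,v),
\]
so that $(mu,mv)$ is sorted if and only if $(u,v)$ is. This follows from a direct position-counting argument in the sorted word of $m^2uv$: each variable $x_\ell$ starts at a position whose parity matches its parity in the sorted word of $uv$ (the shift $2\sum_{\ell'<\ell}m_{\ell'}$ is even), and the block for $x_\ell$ is $2m_\ell$ positions longer, so the odd- and even-position counts in that block each grow by exactly $m_\ell$. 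With this lemma in hand, the subgraph of $G_{I_{2,n,d},s}$ induced on $\{m\cdot x_1x_2,\,m\cdot x_2x_3,\,m\cdot x_3x_4,\,m\cdot x_1x_4\}$ is isomorphic to the induced $4$-cycle produced at $n=4,d=2$, and the non-Freiman direction is completed by Theorem~\ref{main}.
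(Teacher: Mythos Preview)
Your argument is correct, and for the non-Freiman direction it is cleaner than the paper's. Both approaches handle the Freiman side in essentially the same way: the case $d=2n-1$ gives a complete sorted graph, the case $n=3,\,d=4$ is a finite check, and the very small cases are immediate. (One nit: Theorem~\ref{ $0$-spreadprincipalBore} assumes $n\geq 3$, so strictly speaking it does not cover $n=2,\,d=2$; of course the sorted graph there is a path on three vertices, so the omission is harmless.)

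The genuine difference is in showing non-Freimanness for $n\geq 4$ and $2\leq d\leq 2n-2$. The paper proceeds by induction on $n$: it exhibits explicit induced $4$-cycles for each of $d=2,\ldots,6$ at $n=4$, invokes Lemma~\ref{variable} (multiplication by a single new variable) to push these forward, and then at each $n$ still has to treat $d=2n-3$ and $d=2n-2$ separately by multiplying a known $4$-cycle by $x_n$ and $x_n^2$. Your route replaces all of this with a single step: the common-factor identity $\sort(mu,mv)=(mu',mv')$, proved by the parity-shift argument you sketch, shows that multiplying by any monomial $m$ preserves the sorted/unsorted status of a pair. Hence one base $4$-cycle at $(n,d)=(4,2)$, inflated by an $m$ of degree $d-2$ with the exponent constraints you specify (which exist precisely when $d-2\leq 2n-4$), yields an induced $4$-cycle in $G_{I_{2,n,d},s}$ for every relevant $(n,d)$ in one stroke. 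This is more uniform and avoids the case analysis; the paper's approach, on the other hand, uses only the already-stated Lemma~\ref{variable} and does not require the extra (easy) common-factor lemma.
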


 \begin{proof} (a ) By a simple computation shows   that  $I_{2,n,d}$  is  Freiman if $d=2$ or $d=3$.

 (b)  Since the graphs $G_{I_{2,3,2},s}$, $G_{I_{2,3,4},s}$ and
 $G_{I_{2,3,5},s}$ are chordal, as can be seen easily,  the ideals  $I_{2,3,2}$, $I_{2,3,4}$ and $I_{2,3,5}$  are  Freiman.

 If $d=3$, then the elements  $x_1x_2^2,x_1^2x_2,x_1^2x_3,x_1x_3^2,x_2x_3^2$ and $x_2^2x_3$  are the vertices of an induced  $6$-cycle of  $G_{I_{2,3,3},s}$. It follows that $I_{2,3,3}$ is not Freiman.

(c) If $d=2n-1$, then
$I_{2,n,d}=(x_1\prod\limits_{j=2}^{n}x_j^2,\ldots,x_i\prod\limits_{\begin{subarray}{l}
j=1\\
j\neq i
\end{subarray}}^{n}x_j^2,\ldots,x_n\prod\limits_{j=1}^{n-1}x_j^2)$. Thus  $G_{I_{2,n,d},s}$
is a complete graph. Hence $I_{2,n,d}$  is  Freiman.

Next we prove that $I_{2,n,d}$ is not  Freiman if $d\neq 2n-1$.
We apply  induction on $n$.
First, we consider the case $n=4$. Then $d=2,3,4,5$ or $6$, If  $d=2$, then the elements $x_1x_2$, $x_1x_4$, $x_3x_4$ and  $x_2x_3$ are the vertices of an induced $4$-cycle of  $I_{2,4,2}$. If $d=3$, then the elements $x_1^2x_2$, $x_1^2x_4$, $x_1x_3x_4$ and  $x_1x_2x_3$ are the vertices of an induced $4$-cycle of  $I_{2,4,3}$. If $d=4$, then  the elements $x_1^2x_2^2$, $x_1^2x_2x_4$, $x_1x_2x_3x_4$ and  $x_1x_2^2x_3$ are the vertices of an induced $4$-cycle of  $I_{2,4,4}$. If $d=5$, then the elements $x_1^2x_2^2x_3$, $x_1^2x_2x_3x_4$, $x_1x_2x_3^2x_4$ and  $x_1x_2^2x_3^3$ are the vertices of an induced $4$-cycle of  $I_{2,4,5}$. Finally, if $d=6$, then the elements $x_1^2x_2^2x_3x_4$, $x_1^2x_2x_3x_4^2$, $x_1x_2x_3^2x_4^2$ and  $x_1x_2^2x_3^3x_4$ are the vertices of an induced $4$-cycle of  $I_{2,4,6}$.
This shows that  $I_{2,n,d}$ for $d=2,\ldots, 6$ is not Freiman.

Let $n\geq 5$. By the induction hypothesis we know that $I_{2, n-1,d}$ is not Freiman for $d=2,\ldots,2n-4$.  Therefore, by  Lemma~\ref{variable}, it follows that $I_{2, n,d}$ is not Freiman for $d=2,\ldots,2n-4$. Hence remains to be  shown  that $I_{2,n,d}$ is not Freiman for  $d=2n-3$ and $d=2n-2$. The ideal $I_{2,n-1,2n-4}$ is not  Freiman for $n=5$ as shown before, and if $n>5$,  $I_{2,n-1,2n-4}$ is not  Freiman by the induction hypothesis. Hence there exists  an induced   $t$-cycle $C_t$  of  length $t\geq 4$
in  $G_{I_{2,n-1,2n-4},s}$. Let  $\{u_{i_1},\ldots, u_{i_t}\}\subset K[x_1,\ldots,x_{n-1}]$ be the vertex set of $C_{t}$.
If $d=2n-3$, then the elements  $u_{i_1}x_n,\ldots, u_{i_t}x_n$  are the vertices of an  induced $t$-cycle of $G_{I_{2,n,2n-3},s}$.
Hence $I_{2,n,2n-3}$ is not Freiman. Finally,
if $d=2n-2$, then the elements  $u_{i_1}x_n^2,\ldots, u_{i_t}x_n^2$  are the vertices of an  induced $t$-cycle of $G_{I_{2,n,2n-2},s}$.
Hence $I_{2,n,2n-2}$ is not Freiman.
This completes the proof.
\end{proof}

The next theorem treats the case $k\geq 3$.

\begin{Theorem}
\label{k=3}
Let $k\geq 3$. The ideal $I_{k,n,d}$ is Freiman if and only if one of the following conditions holds:
 \begin{enumerate}
\item[(a)] $n=2$ and $d=k,\ldots,2k-1$.
\item[(b)] $n=3$ and $d=3k-2$  or $d=3k-1$.
\item[(c)] $n\geq 4$ and  $d=kn-1$.
\end{enumerate}
\end{Theorem}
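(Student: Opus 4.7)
By Theorem~\ref{main}, $I_{k,n,d}$ is Freiman iff $G_{I_{k,n,d},s}$ is chordal, so the strategy is to verify chordality in the three listed cases and exhibit an induced cycle of length $\geq 4$ in every other case. Two elementary facts about the sort operator drive the argument: (i) a pair of distinct monomials $u,v$ of degree $d$ with $v = u \cdot x_i/x_j$ is sorted, as one reads off from the odd- and even-positioned letters of the sorted product $uv$; and (ii) multiplication by a common monomial $w$ is sort-equivariant, i.e.\ $\sort(wu, wv) = (w u', w v')$ where $(u', v') = \sort(u,v)$. Fact (ii), which underlies the implicit argument in the proof of Lemma~\ref{variable}, holds because inserting the two copies of each letter of $w$ shifts the positions of the old letters by an even number and hence preserves their parities; in particular, multiplication by $w$ preserves both sortedness and unsortedness of pairs.

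\textbf{Sufficiency.} In case (a), a direct sort calculation shows that $(x_1^a x_2^{d-a}, x_1^b x_2^{d-b})$ with $a \neq b$ is sorted iff $|a-b|=1$, so $G_{I_{k,2,d},s}$ is a path and hence chordal. In case (c), and in case (b) with $d=3k-1$, any two generators differ by a single transposition, so the sorted graph is complete. The delicate subcase is (b) with $d=3k-2$, where the six generators fall into three of type $C_i$ (exponent vector a permutation of $(k-2,k,k)$) and three of type $D_i$ (permutation of $(k,k-1,k-1)$). A direct enumeration yields that the sorted edges are exactly $D_iD_j$ for $i \neq j$ and $C_iD_j$ for $i \neq j$, while $C_iD_i$ and $C_iC_j$ are unsorted (via an explicit computation of $\sort$ on these two-swap differences). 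Each $C_i$ is therefore a simplicial vertex---its two neighbors $D_j,D_l$ span an edge of the central triangle $D_1D_2D_3$---so $C_1,C_2,C_3,D_1,D_2,D_3$ is a perfect elimination order, giving chordality.

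\textbf{Necessity.} For $n \geq 4$ and $d \in [k,nk-2]$ I construct an induced $4$-cycle. Choose $a_1,a_2 \in [0,k-1]$, $a_3,a_4 \in [1,k]$, and $a_i \in [0,k]$ for $i \geq 5$ with $\sum_i a_i = d$; such a choice exists because the achievable range of the sum is $[2,nk-2]$, which contains $[k,nk-2]$. Setting $u_1 = \prod_i x_i^{a_i}$, let $u_2 = u_1 \cdot x_2/x_4$, $u_3 = u_1 \cdot x_1x_2/(x_3x_4)$ and $u_4 = u_1 \cdot x_1/x_3$. Each of the four consecutive pairs around the cycle is a single transposition, hence sorted by fact (i). The chord pairs $(u_1,u_3)$ and $(u_2,u_4)$ are unsorted by a direct $\sort$ computation (the sorted output has $x_1$-exponent $a_1+1$ and $x_4$-exponent $a_4-1$, which matches neither $u_1$ nor $u_3$, and symmetrically for the other chord). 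For $n=3$ and $d \in [k,3k-3]$ a double-swap $4$-cycle does not fit in three variables, so I lift the induced $6$-cycle of $I_{2,3,3}$ used in the proof of Theorem~\ref{k=2}(b). Choose $w = x_1^a x_2^b x_3^c$ with $a,b,c \in [0,k-2]$ and $a+b+c = d-3$ (feasible because $k \geq 3$ gives $d-3 \in [k-3,3k-6] \subseteq [0,3(k-2)]$), and form the six monomials $w \cdot x_1x_2^2$, $w \cdot x_1^2x_2$, $w \cdot x_1^2x_3$, $w \cdot x_1x_3^2$, $w \cdot x_2x_3^2$, $w \cdot x_2^2x_3$, each of which lies in $G(I_{k,3,d})$. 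By fact (ii), the induced $6$-cycle structure is preserved, so $I_{k,3,d}$ is not Freiman.

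\textbf{Main obstacle.} The subcase $n=3$, $d=3k-2$ is the most delicate, since the sorted graph has $9$ edges on $6$ vertices and it is not immediately obvious that no induced $4$- or $5$-cycle exists. The crucial structural observation is that $\{C_1,C_2,C_3\}$ is an independent set and each $C_i$ has exactly two neighbors $D_j,D_l$ (where $\{i,j,l\} = \{1,2,3\}$) which are themselves joined by an edge of the central triangle. A secondary technical point is the careful verification of fact (ii) on the doubled-letter merge; this is elementary but essential, as it drives both the necessity lift from $I_{2,3,3}$ in the $n=3$ case and the preservation of unsorted chords under the extra factor $x_5^{a_5} \cdots x_n^{a_n}$ in the $n \geq 4$ construction.
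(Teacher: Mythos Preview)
Your proof is correct and follows the same high-level strategy as the paper (apply Theorem~\ref{main}, verify chordality in the listed cases, and exhibit an induced cycle otherwise), but the execution is noticeably more unified. The paper handles necessity by an extensive case split: for $n=3$ it treats $d=k$, $k{+}1\le d\le 2k{-}1$, and $2k\le d\le 3k{-}3$ separately, each with its own explicit $6$-cycle (and a citation for $d=k$); for $n\ge 4$ it breaks $[k{+}1,kn{-}2]$ into four overlapping ranges and writes down a different $4$-cycle in each. Your two devices---fact~(ii), the sort-equivariance of multiplication by a common monomial, and the box-constraint feasibility argument for the exponent vector $(a_1,\ldots,a_n)$---collapse all of this into a single $6$-cycle construction for $n=3$ (the lift of the $I_{2,3,3}$ hexagon) and a single $4$-cycle construction for $n\ge 4$, and your version also covers the endpoint $d=k$ for $n\ge 4$ without a separate citation. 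On the sufficiency side your treatment of $n=3$, $d=3k{-}2$ is more informative than the paper's: you identify the simplicial vertices $C_i$ and give a perfect elimination order, whereas the paper simply asserts that the six-vertex graph ``can be checked'' to be chordal. The trade-off is that your argument relies on fact~(ii) being stated and justified carefully (it is, via the even parity shift), while the paper's explicit cycles can be verified by inspection without any auxiliary lemma.
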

\begin{proof} (a) The  graph $G_{I_{2,n,d},s}$
is path graph for $d=k,\ldots,2k-1$. Hence $I_{2,n,d}$ is Freiman.

(b) For $d=3k-2$ the graph  $G_{I_{k,3,d},s}$ has 6 vertices, and for  and $d=3k-1$ it has 4 vertices.  It can be checked that both graphs are chordal.  Hence $I_{k,3,d}$ is Freiman for  $d=3k-2$  or $d=3k-1$.

It remains to be  shown that $I_{k,3,d}$ is not Freiman if  $d=k,\ldots, 3k-3$. If $d=k$, then  $I_{k,3,d}$ is not Freiman by \cite[Theorem 3.3]{HZ}.

If $k+1\leq d\leq 2k-1$, then the elements  $x_1^{k-1}x_{2}^{d-k+1},x_1^{k}x_{2}^{d-k},x_1^{k}x_{2}^{d-k-1}x_{3},
x_1^{k-1}x_{2}^{d-k-1}x_{3}^{2},\\ x_1^{k-2}x_{2}^{d-k}x_{3}^{2},
x_1^{k-2}x_{2}^{d-k+1}x_{3}$  are the vertices of an induced  $6$-cycle of $G_{I_{k,3,d},s}$, and if
 $2k\leq d\leq 3k-3$, then the elements  $x_1^{k-1}x_{2}^{k}x_{3}^{d-2k+1},x_1^{k}x_{2}^{k-1}x_{3}^{d-2k+1},
x_1^{k}x_{2}^{k-2}x_{3}^{d-2k+2}, \\
x_1^{k-1}x_{2}^{k-2}x_{3}^{d-2k+3}, x_1^{k-2}x_{2}^{k-1}x_{3}^{d-2k+3},
x_1^{k-2}x_{2}^{k}x_{3}^{d-2k+2}$  are the vertices of an induced  $6$-cycle of $G_{I_{k,3,d},s}$.
Hence, in both cases,  $I_{k,3,d}$  is not Freiman by  Theorem \ref{main}.

(c) If $d=kn-1$,  then  $I_{k,n,d}=(x_1^{k-1}\prod\limits_{j=2}^{n}x_j^k,\ldots,x_i^{k-1}\prod\limits_{\begin{subarray}{l}
j=1\\
j\neq i
\end{subarray}}^{n}x_j^k,\ldots,x_n^{k-1}\prod\limits_{j=1}^{n-1}x_j^k)$. It follows that $G_{I_{k,n,d},s}$
is a complete graph. Hence $I_{k,n,d}$  is  Freiman.

It remains to show that $I_{k,3,d}$ is not Freiman if  $k+1\leq d\leq kn-2$. If $k+1\leq d\leq 2k$, then the elements  $x_1^{k}x_2^{d-k},x_1^{k}x_2^{d-k-1}x_4,x_1^{k-1}x_2^{d-k-1}x_3x_4$ and $x_1^{k-1}x_2^{d-k}x_3$  are the
vertices of an induced $4$-cycle of $G_{I_{k,n,d},s}$, if $2k+1\leq d\leq 3k-1$, then the elements  $x_1^{k}x_2^{k}x_3^{d-2k},x_1^{k}x_2^{k-1}x_3^{d-2k}x_4,
x_1^{k-1}x_2^{k-1}x_3^{d-2k+1}x_4,
x_1^{k-1}x_2^{k}x_3^{d-2k+1}$  are the vertices of an induced $4$-cycle of $G_{I_{k,n,d},s}$, and if  $3k\leq d\leq 4k-2$, then the elements  $x_1^{k}x_2^{k}x_3^{k-1}x_4^{d-3k+1},x_1^{k}x_2^{k-1}x_3^{k-1}x_4^{d-3k+2},
 x_1^{k-1}x_2^{k-1}x_3^{k}x_4^{d-3k+2},x_1^{k-1}x_2^{k}x_3^{k}x_4^{d-3k+1}$  are the vertices of an induced $4$-cycle of $G_{I_{k,n,d},s}$.

Now we consider the case   $(m-1)k-1\leq d\leq mk-2$ for any $5\leq m\leq n$. In this case the elements
$x_1^{k}x_2^{k}x_3^{k-1}x_4^{k-1}(\prod\limits_{j=5}^{m-1}x_j^k)x_m^{d-(m-1)k+2},
 x_1^{k}x_2^{k-1}x_3^{k-1}x_4^{k}(\prod\limits_{j=5}^{m-1}x_j^k)x_m^{d-(m-1)k+2},\\
  x_1^{k-1}x_2^{k-1}x_3^{k}x_4^{k}(\prod\limits_{j=5}^{m-1}x_j^k)x_m^{d-(m-1)k+2},
x_1^{k-1}x_2^{k}x_3^{k}x_4^{k-1}(\prod\limits_{j=5}^{m-1}x_j^k)x_m^{d-(m-1)k+2}$  are the vertices of an induced $4$-cycle of $G_{I_{k,n,d},s}$.
Thus, in all cases, $I_{k,n,d}$  is not Freiman.
\end{proof}

 \medskip
\hspace{-6mm} {\bf Acknowledgments}

 \vspace{3mm}
\hspace{-6mm}  This research is supported by the National Natural Science Foundation of China (No.11271275) and  by foundation of the Priority Academic Program Development of Jiangsu Higher Education Institutions.

  \medskip

\vspace{0.5cm}

\end{document}